\documentclass[12pt]{article}
\usepackage{amsmath, amsfonts, amssymb, graphicx}
\usepackage[bookmarksnumbered, plainpages]{hyperref}

\usepackage[a4paper,top=2cm,bottom=2cm,left=2cm,right=2cm]{geometry}

\setcounter{MaxMatrixCols}{10}

\newtheorem{theorem}{Theorem}

\newtheorem{lemma}[theorem]{Lemma}

\newenvironment{proof}[1][Proof]{\noindent\textbf{#1.} }{\ \rule{0.5em}{0.5em}}

\begin{document}

\title{Groups with the real chain condition on\\non-pronormal subgroups}
\author{Ulderico Dardano\ \ {\small \textsl{and}}\ \ Fausto De Mari}
\date{}
\maketitle

\begin{center}
\vskip -0.95cm \texttt{{\small \phantom{ ......}dardano@unina.it%
\phantom{
.....}\ fausto.demari@unina.it}}\\[0pt]
\vskip 0.3cm Universit\`a degli Studi di Napoli ``Federico II'' \\[0pt]
\end{center}

\abstract{\noindent It is shown that a gerenalised radical group has no chain of  non-pronormal subgroups with the same order type as the set $\mathbb{R}$ of the real numbers if and only if either the group is minimax or all subgroups are pronormal.}

\bigskip \noindent \textbf{Mathematics Subject Classification (2020):} 20F22, 20E15, 20F19.

\smallskip \noindent \textbf{Keywords:} pronormal subgroup, minimax group, weak chain condition, deviation.

\vskip5mm

\section{Introduction}
The study of generalised soluble groups with a finiteness condition on the set
of subgroups not verifying a given property is a
standard problem in the theory of groups (see \cite{DS}). The properties which have
received the most attention are probably chain conditions on one hand and
generalization of normality on the other. 

Until now, the weakest  chain condition considered is the so-called {\it deviation} (see \cite{GKR,T03}). We do not state here the recursive definition of the deviation since, for our purposes, it is
enough to recall the fact that a poset has deviation if and only if it
contains no subposet order-isomorphic to the poset $D$ of all dyadic rationals $m/2^n$ in the interval  $0$ to $1$ (see \cite{CR}, 6.1.3). Since $D$ is a countable dense poset without endpoints, it is order-isomorphic to the rational numbers, by Cantor's isomorphism theorem. 
Therefore  \emph{a poset has deviation if and only if it
contains no subposet order-isomorphic to the poset $\mathbb{Q}$ of rational
numbers} with their usual ordering. Note that this condition generalises the so-called weak (double) chain conditions introduced by Zaicev \cite{Z} (see \cite{DDM} for more details).

Very recently, it has been shown  \cite{GKR} that \emph{if $G$ is a
radical group and the set of non-pronormal subgroups has deviation, then
either $G$ is minimax or all subgroups are pronormal}. Recall that a {\it radical group} is a group with an ascending (normal) series with locally nilpotent factors. Moreover, a  subgroup $X$ of a group $G$ is said to be {\it pronormal} if $X$ and $X^g$ are conjugate in $\langle X, X^g\rangle$ for every element $g$ of $G$; clearly, every normal subgroup is pronormal.  The structure of locally soluble groups in which all subgroups are pronormal has been long known \cite{KS} and groups with restrictions on the set of non-pronormal subgroups have been considered in many papers  \cite{GKR, dGV,dGV95,dGV00,V1,V2}.  Also, we must recall that a {\it minimax soluble-by-finite group} is a group with a finite series whose factors are cyclic, quasicyclic or finite. In fact, these are just soluble-by-finite groups with deviation on all subgroups \cite{T03}. 
In this paper we  extend the  result in \cite{GKR} to {\em gerenalised radical} groups, i.e. with an ascending (normal) series with locally nilpotent or locally finite factors, and show that we can consider a chain condition even  weaker  than deviation. 

A poset with the same order type as $\mathbb{R}$ will be called {\em $\mathbb{R}$-chain} and we will say that a poset has the  {\em real chain condition (RCC)} if it contains no {\em $\mathbb{R}$-chain} as subposet. Generalised radical groups with RCC on the poset of all subgroups with -- or without -- a given property have been recently considered in \cite{DDM}. Along the same lines we obtain here the  result below.

\medskip

\noindent
{\bf Theorem\;\;\;} {\it Let $G$ be a generalised radical group. Then $G$ satisfies the real chain condition on non-pronormal subgroups if and only if either $G$ is a soluble-by-finite minimax group or all subgroups of $G$ are pronormal.}

\medskip

Our notation and terminology are standard and can be found in \cite{R72}.

\section{ Proof of the Theorem}

 We first recall some basic fact concerning pronormal subgroups that
will be useful for our purposes. For more details about pronormal subgroups
of infinite groups we refer to \cite{dGV}.

\medskip

\begin{lemma}
\label{f1}\label{f2} Let $G$ be a group. Then:
\begin{itemize}
\item[(i)]If $H$ and $K$ are pronormal subgroups of $G$ such that $H^{K}=H$, then $HK$ is pronormal in $G$.
\item[(ii)] A subgroup of $G$ is normal if and only
if it is pronormal and ascendant. 
\item[(iii)] If $G$ is locally nilpotent, then every pronormal subgroup of $G$ is normal.
\end{itemize}
\end{lemma}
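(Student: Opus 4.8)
The three parts are essentially independent, so the plan is to treat each on its own.

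For (i), the plan is to verify pronormality of $HK$ straight from the definition. Note first that $H^{K}=H$ makes $HK$ a subgroup with $H\trianglelefteq HK$, and $(HK)^{g}=H^{g}K^{g}$. Fix $g\in G$, set $L=\langle HK,(HK)^{g}\rangle$, and seek an element of $L$ conjugating $HK$ onto $(HK)^{g}$. Using pronormality of $H$, choose $a\in\langle H,H^{g}\rangle\le L$ with $H^{a}=H^{g}$. Since $K^{a}$ and $K^{g}$ are both conjugates of the pronormal subgroup $K$, choose $b\in\langle K^{a},K^{g}\rangle\le L$ with $(K^{a})^{b}=K^{g}$. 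The decisive point—and the only place $H^{K}=H$ enters—is that both $K^{a}$ and $K^{g}$ normalise $H^{g}$: conjugating $H^{K}=H$ by $a$ shows $K^{a}$ normalises $H^{a}=H^{g}$, and conjugating by $g$ shows $K^{g}$ normalises $H^{g}$; hence $b$ fixes $H^{g}$. Then $(HK)^{ab}=(H^{a}K^{a})^{b}=(H^{g}K^{a})^{b}=H^{g}K^{g}=(HK)^{g}$ with $ab\in L$, as required. I expect no real obstacle beyond the bookkeeping of which auxiliary subgroups lie inside $L$.

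For (ii), one implication is immediate: a normal subgroup is pronormal (as recalled in the introduction) and trivially ascendant. For the converse I would argue by transfinite induction along an ascending series $X=X_{0}\trianglelefteq X_{1}\trianglelefteq\cdots\trianglelefteq X_{\sigma}=G$, proving $X\trianglelefteq X_{\alpha}$ for every $\alpha$. The engine is the elementary sublemma: if $X\trianglelefteq M\trianglelefteq Y$ and $X$ is pronormal in $Y$, then $X\trianglelefteq Y$. Indeed, for $g\in Y$ one has $X^{g}\le M^{g}=M$, so the conjugator $u\in\langle X,X^{g}\rangle$ supplied by pronormality lies in $M$; as $X\trianglelefteq M$ this forces $X^{g}=X^{u}=X$. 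Since pronormality of $X$ in $G$ restricts to pronormality in each $X_{\alpha}$, the successor step is the sublemma applied to $X\trianglelefteq X_{\gamma}\trianglelefteq X_{\gamma+1}$, while limit ordinals are handled by taking unions. I expect no difficulty here.

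For (iii), the plan is to reduce to a statement about abnormal subgroups. First I would show that pronormality of $X$ forces $N:=N_{G}(X)$ to be abnormal in $G$: given $g\in G$, a conjugator $u\in\langle X,X^{g}\rangle$ with $X^{u}=X^{g}$ satisfies $ug^{-1}\in N$, while $u\in\langle N,N^{g}\rangle$ because $X\le N$ and $X^{g}\le N^{g}$; hence $g\in\langle N,N^{g}\rangle$. It then suffices to prove that a locally nilpotent group has no proper abnormal subgroup. Here local nilpotence is used through finitely generated subgroups: if $H$ is abnormal and $g\notin H$, then $g\in\langle H,H^{g}\rangle$ is a word in finitely many elements of $H$ and of $H^{g}$; collecting the underlying finite subset $H_{0}\le H$ together with $g$ inside a finitely generated, hence nilpotent, subgroup $P$ gives $g\in\langle H_{0},H_{0}^{g}\rangle=P$, so that $H_{0}$ is contranormal in $P$. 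As $H_{0}$ is subnormal in the nilpotent group $P$, its normal closure is proper unless $H_{0}=P$, which forces $g\in H_{0}\le H$, a contradiction. Therefore $N=G$ and $X\trianglelefteq G$. I expect the main obstacle to be exactly this last reduction: converting the single instance $g\in\langle H,H^{g}\rangle$ into a genuinely finitely generated nilpotent configuration, and checking carefully that subnormality together with contranormality indeed forces $H_{0}=P$.
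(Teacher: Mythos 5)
Your three arguments are all correct. For comparison: the paper does not prove this lemma at all --- it simply cites de Giovanni--Vincenzi (Corollary 2.8, Corollary 2.3 and Theorem 2.4 of \cite{dGV}), so you have supplied self-contained proofs of the cited facts, and they are essentially the standard ones. In (i) the verification is sound: the only delicate points are that $b$ may be chosen in $\langle K^{a},K^{g}\rangle$ (which follows because $K^{a}$ is itself pronormal, applied to the element $a^{-1}g$) and that this subgroup lies in $L$ and normalises $H^{g}$; you handle both. In (ii) the sublemma ``$X\trianglelefteq M\trianglelefteq Y$ with $X$ pronormal in $Y$ forces $X\trianglelefteq Y$'' plus transfinite induction is exactly the classical proof. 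In (iii) you rightly avoid the tempting but false shortcut of deducing the result from (ii) via ``every subgroup of a locally nilpotent group is ascendant'' (locally nilpotent groups need not satisfy the normalizer condition); instead you pass to the abnormality of $N_{G}(X)$ and reduce to a finitely generated, hence nilpotent, subgroup $P=\langle H_{0},g\rangle$ in which $H_{0}$ is both subnormal and contranormal, which indeed forces $H_{0}=P$ (a proper subnormal subgroup has proper normal closure). The one step worth spelling out is $\langle H_{0},H_{0}^{g}\rangle=P$: it holds because $g\in\langle H_{0},H_{0}^{g}\rangle$ gives one inclusion and $H_{0},g\in P$ gives the other. No gaps.
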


\begin{proof}
See \cite{dGV}, Corollary 2.8, Corollary 2.3 and Theorem 2.4.
\end{proof}

%
%
%

\medskip
Let us now state a technical lemma which appears in \cite{DDM} in a more general setting. However, for the sake of completeness, we give a proof also here.

\medskip
\begin{lemma}
\label{1} Let $G$ be a group having a section $H/K$ which is the direct product of an infinite collection $(H_{\lambda }/K)_{\lambda \in \Lambda }$ of non-trivial subgroups, and let $L$ be a
subgroup of $G$ such that $L\cap H\leq K$ and $\left\langle H_{\lambda
},L\right\rangle =H_{\lambda }L$ for each $\lambda $.
If there is no $\mathbb{R}$-chain of non-pronormal subgroups of $G$ in the
interval $[H/K]$, then there exists a normal subgroup $H^{\ast }$ of $H$
containing $K$ such that $LH^{\ast }=H^{\ast }L$ is a pronormal subgroup of $G$.
\end{lemma}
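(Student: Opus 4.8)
The plan is to parametrise the candidates for $H^{\ast}$ by subsets of $\Lambda$ and then exploit the abundance of $\mathbb{R}$-chains inside the Boolean lattice $\mathcal{P}(\Lambda)$. For $S\subseteq\Lambda$ let $H_{S}$ be the preimage in $H$ of $\prod_{\lambda\in S}H_{\lambda}/K$, so that $K\le H_{S}\trianglelefteq H$ and the assignment $S\mapsto H_{S}$ is a strict lattice embedding of $\mathcal{P}(\Lambda)$ into the interval $[H/K]$ (strictness because the factors are non-trivial and the sum is direct). First I would check that $L$ permutes with every $H_{S}$: for finite $S$ this follows by induction from $\langle H_{\lambda},L\rangle=H_{\lambda}L$ together with $[H_{\lambda},H_{\mu}]\le K$, and the general case follows by writing $H_{S}$ as the directed union of the $H_{F}$ with $F\subseteq S$ finite, whence $LH_{S}=\bigcup_{F}LH_{F}=\bigcup_{F}H_{F}L=H_{S}L$. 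Thus each $H_{S}$ is an admissible choice for $H^{\ast}$, each $LH_{S}$ is a subgroup of $G$, and Dedekind's law together with $L\cap H\le K\le H_{S}$ gives $LH_{S}\cap H=H_{S}$.

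Since $\Lambda$ is infinite, $\mathcal{P}(\Lambda)$ contains a chain order-isomorphic to $\mathbb{R}$: fixing an injection $\mathbb{Q}\hookrightarrow\Lambda$ and sending each real $r$ to the set of rationals below $r$ produces such a chain by Dedekind cuts. Transporting it through $S\mapsto H_{S}$ yields a strictly increasing $\mathbb{R}$-chain $(H_{S_{r}})_{r\in\mathbb{R}}$ inside $[H/K]$, and a parallel $\mathbb{R}$-chain $(LH_{S_{r}})_{r\in\mathbb{R}}$.

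I would then argue by contradiction. If the conclusion failed, then no admissible subgroup would do, so in particular every $LH_{S_{r}}$ would be non-pronormal in $G$. The aim is to convert this into an $\mathbb{R}$-chain of non-pronormal subgroups lying in $[H/K]$, which is forbidden by hypothesis. The bridge I would use is the identity $LH_{S}\cap H=H_{S}$ together with Lemma~\ref{f1}: the point is to transfer pronormality between $H_{S}$ and $LH_{S}$, so that non-pronormality of each $LH_{S_{r}}$ forces non-pronormality of the genuine interval member $H_{S_{r}}$, producing the forbidden $\mathbb{R}$-chain $(H_{S_{r}})_{r}$. This contradiction shows that some $LH_{S_{0}}$ is pronormal, and then $H^{\ast}:=H_{S_{0}}$ is normal in $H$, contains $K$, satisfies $LH^{\ast}=H^{\ast}L$, and is the required subgroup.

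The main obstacle is exactly this transfer of pronormality between $H_{S}$ and $LH_{S}$. Since $L$ is assumed only to permute with, and not to normalise, the factors $H_{\lambda}$, Lemma~\ref{f1}(i) --- which would give the implication at once from $L$ being pronormal and normalising $H_{S}$ --- cannot be quoted verbatim. I expect to have to exploit the direct-product structure ($[H_{\lambda},H_{\mu}]\le K$ and $L\cap H\le K$) to arrange, after discarding at most finitely many factors, that $L$ normalises the chosen $H_{S}$ and that $L$ is pronormal on the relevant section, at which point Lemma~\ref{f1}(i) applies; alternatively one argues the implication directly from the definition of pronormality, conjugating $H_{S}$ and its $g$-image inside $\langle H_{S},H_{S}^{g}\rangle$ and controlling the $L$-part by means of the permutability. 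Securing this step cleanly, uniformly in $S$, is where the real work lies.
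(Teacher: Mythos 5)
Your construction is the right one and coincides with the paper's: the paper assumes $\Lambda$ countable, identifies it with $\mathbb{Q}$, sets $K_r=\mathrm{Dr}_{i<r}H_i$ for $r\in\mathbb{R}$ (exactly your Dedekind cuts), and proves strictness of $r\mapsto K_rL$ just as you do, via Dedekind's modular law and $L\cap K_{r_2}\leq K$. The gap is in your final step, and it is a self-inflicted one. Having produced the strictly increasing family $(LH_{S_r})_{r\in\mathbb{R}}$, you do not need to pull non-pronormality back from $LH_{S_r}$ to $H_{S_r}$: the chain condition is applied to the chain of products itself. Since $(LH_{S_r})_{r\in\mathbb{R}}$ is an $\mathbb{R}$-chain of subgroups of $G$, it cannot consist entirely of non-pronormal subgroups, so some $LH_{S_{r_0}}$ is pronormal and $H^{\ast}=H_{S_{r_0}}$ is the required subgroup. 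That is the whole proof. The ``transfer of pronormality'' between $H_S$ and $LH_S$ that you single out as the main obstacle is not needed, and there is no reason to expect it to hold: $L$ is neither assumed pronormal nor assumed to normalise $H_S$, so Lemma \ref{f1}(i) gives nothing, and the contradiction argument you sketch would not close.

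What pushed you into this detour is presumably the phrase ``in the interval $[H/K]$'' in the hypothesis, which suggests the forbidden chains must consist of subgroups between $K$ and $H$, whereas $LH_{S_r}$ need not lie in $H$. Note, however, that the paper's own proof applies the hypothesis to the chain $(K_rL)_{r\in\mathbb{R}}$, which likewise need not lie in $[H/K]$; in every application of the lemma the group $G$ satisfies RCC on non-pronormal subgroups globally, so the interval restriction plays no role and the argument is sound under that reading. If one insists on the literal interval form of the hypothesis, the fix is to adjust the statement (or the reading of ``in $[H/K]$''), not to invent the transfer step. Everything else in your write-up --- the permutability of $L$ with each $H_S$ via induction and directed unions, and the identity $LH_S\cap H=H_S$ yielding strictness --- is correct and in fact more detailed than the paper, which simply asserts $\langle K_r,L\rangle=K_rL$.
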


\begin{proof}
Clearly the set $\Lambda$ may be assumed to be countable, so that it can be
replaced by the set $\mathbb{Q}$ of the rationals. Consider the subgroup $%
K_r=\underset{i<r}{\mathrm{Dr}}H_{i}$ for each $r\in\mathbb{R}$; then $%
\left\langle K_{r},L\right\rangle =K_{r}L$ for each $r\in\mathbb{R}$. Let $%
r_1,r_2\in\mathbb{R}$ with $r_1<r_2$, then $K_{r_1}< K_{r_2}$. If were $%
K_{r_1}L= K_{r_2}L$, since $L\cap K_{r_2}\leq L\cap H\leq K\leq K_{r_1}$,
Dedekind's Modular Law would give that 
\begin{equation*}
K_{r_2}=K_{r_2}L\cap K_{r_2}=K_{r_1}L\cap K_{r_2}=K_{r_1}(L\cap
K_{r_2})=K_{r_1};
\end{equation*}
this contradiction proves that $K_{r_1}L< K_{r_2}L$. Therefore $(K_rL)_{r\in\mathbb{R}}$ is an $\mathbb{R}$-chain and so $K_rL$ must
be a pronormal subgroup of $G$ for some $r\in\mathbb{R}$; hence the lemma holds with $H^{\ast }=K_r$.
\end{proof}

\medskip

\begin{lemma}
\label{p1} Let $G$ be a group with RCC on non-pronormal subgroups and let
$H/K$ be any section of $G$ which is the direct product of an infinite
family $(H_{\lambda}/K)_{\lambda \in \Lambda}$ of non-trivial subgroups.
Then $H$ is a pronormal subgroup of $G$; moreover, if $H$ is an ascendant
subgroup of $G$, then $H_{\lambda}$ is normal in $G$ for each $\lambda\in
\Lambda$.
\end{lemma}

\begin{proof}
Let $\lambda$ be any element of $%
\Lambda$. Clearly we may write $H/K=(H_{1}/K) (H_{2}/K)$ where both $H_{1}/K$ and $%
H_{2}/K$ are the direct product of an infinite collection of
non-trivial subgroups and $(H_{1}/K)\cap (H_{2}/K)=H_\lambda/K$. Application of Lemma \ref{1} yields that there exist an $H_{1}$-invariant subgroup $H_{1}^{\ast }$ in $[H_{1}/K]$ and an $H_{2}$-invariant subgroup $%
H_{2}^{\ast }$ in $[H_{2}/K]$ such that both $H_{1}^{\ast }H_{2}$ and $%
H_{1}H_{2}^{\ast }$ are pronormal subgroups of $G$. Clearly $H_{1}^{\ast
}H_{2} $ and $H_{1}H_{2}^{\ast }$ are both normal subgroups of $H$, so that $%
(H_{1}H_{2}^{\ast })^{(H_{1}^{\ast }H_{2})}=H_{1}H_{2}^{\ast }$ and hence $%
H=\left\langle H_{1}H_{2}^{\ast },H_{1}^{\ast }H_{2}\right\rangle $ is a pronormal subgroup of $G$ by Lemma \ref{f2}.

Since $H_{1}/K$ and $H_{2}/K$ are direct product of infinitely many non-trivial subgroups, arguing as in the first part of this proof we have that $H_{1}$ and $H_{2}$ are pronormal in $G$. Therefore if we assume that $H$ is ascendant, then $H_{1}$ and $H_{2}$ are likewise ascendant and so even normal by Lemma \ref{f1}, and so $H_{\lambda}=H_{1}\cap H_{2}$ is normal in $G$.\end{proof}

\medskip
In \cite{DDM} it is proved that {\it a gerenalised radical groups with RCC on non-normal subgroups  is either a soluble-by-finite minimax group or a Dedekind group}. We use this result in the next proof.

\medskip
\begin{lemma}
\label{za}Let $G$ be a group with RCC on non-pronormal subgroups, and let $H$ be any locally nilpotent non-minimax subgroup of $G$. Then $H$ is a Dedekind group; moreover, if $H$ is periodic, then $H$ is pronormal in $G$. In particular,
the Hirsch-Plotkin subgroup $R$ of $G$ is hypercentral and all subgroups of $%
R$ are ascendant in $G$.
\end{lemma}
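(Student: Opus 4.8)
The plan is to establish the three assertions in turn, the first two by feeding suitable sections of $H$ into the earlier lemmas and the last by specializing the first two to the Hirsch--Plotkin subgroup. For the Dedekind assertion I would first record the standard fact that pronormality is inherited by intermediate subgroups: if $X\le H\le G$ and $X$ is pronormal in $G$, then $X$ is pronormal in $H$. Since $H$ is locally nilpotent, Lemma~\ref{f1}(iii) then shows that any subgroup of $H$ which is pronormal in $H$ is already normal in $H$; hence every non-normal subgroup of $H$ is non-pronormal in $G$. Consequently an $\mathbb{R}$-chain of non-normal subgroups of $H$ would be an $\mathbb{R}$-chain of non-pronormal subgroups of $G$, so $H$ inherits the real chain condition on non-normal subgroups. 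As $H$ is locally nilpotent it is generalised radical, and the quoted result of \cite{DDM} forces $H$ to be soluble-by-finite minimax or Dedekind; being non-minimax, $H$ is Dedekind.

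For the periodic case the strategy is to exhibit a section of $H$ whose \emph{numerator is $H$ itself} and which is a direct product of infinitely many non-trivial subgroups, and then invoke Lemma~\ref{p1}. I would write $H=\mathrm{Dr}_{p}H_{p}$ as the direct product of its primary components. If infinitely many $H_{p}$ are non-trivial, this is immediate with the denominator trivial. Otherwise only finitely many components occur, and since $H$ is non-minimax some component $H_{p_{0}}$ fails the minimal condition. Being a direct factor of the Dedekind group $H$, the component $H_{p_{0}}$ is itself Dedekind, hence either abelian or of the form $Q_{8}\times E$ with $E$ elementary abelian.

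The construction of the section then splits by cases. If $H_{p_{0}}=Q_{8}\times E$ is non-abelian, then $E$ is infinite elementary abelian, and collapsing $Q_{8}$ together with all other components to a subgroup $K$ gives $H/K\cong E$. If $H_{p_{0}}$ is abelian, write $H_{p_{0}}=D\times R$ with $D$ divisible and $R$ reduced; failure of the minimal condition forces either $D$ to have infinite rank or $R$ to be infinite. When $D$ has infinite rank it is already a direct product of infinitely many quasicyclic groups, and collapsing $R$ and the other components yields $H/K\cong D$. When $R$ is infinite, the key small fact is that a reduced abelian $p$-group with $R/p_{0}R$ finite is itself finite, so $R/p_{0}R$ is infinite; taking $K=p_{0}H_{p_{0}}\times\mathrm{Dr}_{q\ne p_{0}}H_{q}$ gives $H/K\cong R/p_{0}R$, infinite elementary abelian. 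In every case $H/K$ is a direct product of infinitely many non-trivial subgroups, so Lemma~\ref{p1} yields that $H$ is pronormal in $G$. I expect this to be the main technical obstacle: arranging the infinite direct product so that its numerator is exactly $H$ (rather than a proper characteristic subgroup such as the socle), and in particular disposing of the reduced abelian case, is where the argument must be done carefully.

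Finally, for the Hirsch--Plotkin subgroup $R$, which is locally nilpotent and normal in $G$, I would argue by cases. If $R$ is non-minimax, then by the first assertion $R$ is Dedekind, hence nilpotent of class at most $2$ and in particular hypercentral; if $R$ is minimax, then $R$ is hypercentral by the structure theory of locally nilpotent minimax groups (\cite{R72}). Once $R$ is known to be hypercentral, every subgroup of $R$ is ascendant in $R$; since $R$ is normal, and hence ascendant, in $G$, transitivity of ascendance shows that every subgroup of $R$ is ascendant in $G$, which completes the proof.
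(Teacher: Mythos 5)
Your proposal is correct and follows essentially the same route as the paper: RCC on non-normal subgroups together with the quoted result of \cite{DDM} gives the Dedekind property, an infinite direct product quotient of $H$ itself feeds into Lemma~\ref{p1} to get pronormality, and hypercentrality of $R$ follows from the minimax-or-Dedekind dichotomy plus normality of $R$. The only divergence is that where you build the infinite direct product quotient by hand (primary components, then the divisible/reduced splitting), the paper writes the periodic Dedekind group as $E\times A$ with $A$ abelian non-minimax and simply cites \cite{GKR}, Lemma 3.2, for a quotient $A/B$ that is a direct product of infinitely many non-trivial subgroups.
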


\begin{proof}
Il follows from Lemma \ref{f1} that the locally nilpotent subgroup $H$ has RCC on non-normal subgroups so that $H$ is a Dedekind group by the above quoted result from\;\cite{DDM}.

Assume that $H$ is periodic, then $H=E\times A$ where $A$ is abelian and $E$
is either trivial or a quaternion group of order $8$ (see \cite{R96}, 5.3.7). Clearly $A$ is not minimax, so that it has a quotient $A/B$ which is the direct product of infinitely many non-trivial groups (see for instance \cite{GKR}, Lemma 3.2). Then $H/B$ is likewise a direct product of infinitely many non-trivial groups, and hence Lemma \ref{p1} yields that $H$ is pronormal in $G$.
%
%
%
%
%

Finally, the first part of this proof gives that $R$ is either minimax or a Dedekind
group, in particular, since locally nilpotent groups of finite rank are
hypercentral (see \cite{R72} Part\;2, p.38), $R$ is hypercental and so all
subgroups of $R$ are ascendant in $G$.
\end{proof}

\medskip

\begin{lemma}
\label{p2}Let $G$ be a group with RCC on non-pronormal subgroups, and assume that the Hirsch-Plotkin radical $R$ of $G$ contains a subgroup which is the direct
product of infinitely many cyclic non-trivial subgroups. Then all subgroups
of $R$ are normal in $G$.
\end{lemma}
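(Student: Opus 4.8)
The plan is to reduce the statement to the claim that every \emph{cyclic} subgroup of $R$ is normal in $G$, and then to realise each such cyclic subgroup as one of the direct factors of an infinite direct product inside $R$, so that Lemma~\ref{p1} delivers its normality directly.

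First I would record the structural consequences of the hypotheses. Since $R$ contains a direct product of infinitely many non-trivial cyclic subgroups, it has infinite rank and hence is not minimax; as $R$ is locally nilpotent, Lemma~\ref{za} forces $R$ to be a Dedekind group and, in addition, guarantees that every subgroup of $R$ is ascendant in $G$. By Lemma~\ref{f1}(ii) this last fact means that, for a subgroup of $R$, being normal in $G$ is equivalent to being pronormal in $G$, which is precisely what allows the pronormality produced by Lemma~\ref{p1} to be upgraded to full normality. Next I would perform the reduction to the cyclic case: if every cyclic subgroup $\langle a\rangle\le R$ is normal in $G$, then for an arbitrary $X\le R$ and every $g\in G$ one has $a^{g}\in\langle a\rangle\le X$ for each $a\in X$, so $X^{g}\le X$ and therefore $X\trianglelefteq G$. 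Thus it suffices to treat cyclic subgroups.

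Write $D=\mathrm{Dr}_{n}\langle x_{n}\rangle$ for the given infinite direct product lying in $R$, and fix a non-trivial $a\in R$. The one genuine obstacle is that $\langle a\rangle$ need not be independent from $D$, so I cannot simply form $\langle a\rangle\times D$. To circumvent this, observe that $\langle a\rangle\cap D$ is cyclic, say generated by an element $d\in D$, and that $d$ has finite support, i.e. it involves only finitely many of the factors $\langle x_{n}\rangle$. Deleting those finitely many factors leaves an infinite subproduct $D^{\ast}$ of $D$ for which $\langle a\rangle\cap D^{\ast}\le\langle d\rangle\cap D^{\ast}=1$, the last equality holding because the supports are disjoint. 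Since $R$ is Dedekind, both $\langle a\rangle$ and $D^{\ast}$ are normal in their join, so $\langle a\rangle D^{\ast}=\langle a\rangle\times D^{\ast}$ is a direct product of infinitely many non-trivial cyclic subgroups having $\langle a\rangle$ as a direct factor. This join is a subgroup of $R$, hence ascendant in $G$ by Lemma~\ref{za}, and therefore Lemma~\ref{p1} yields that each factor, in particular $\langle a\rangle$, is normal in $G$.

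Combining this with the reduction of the previous paragraph gives that all subgroups of $R$ are normal in $G$, as required. The only step that requires genuine care is the possible non-triviality of $\langle a\rangle\cap D$, which is resolved by trimming $D$ to an infinite subproduct avoiding the finite support of the generator of the intersection; everything else is routine bookkeeping with the already-established facts that $R$ is Dedekind and that all of its subgroups are ascendant in $G$.
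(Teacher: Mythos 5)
Your proof is correct and follows essentially the same strategy as the paper: reduce to cyclic subgroups, place a given cyclic subgroup $\langle a\rangle$ inside an infinite direct product of non-trivial cyclic subgroups of the Dedekind group $R$ meeting it trivially, and invoke Lemma~\ref{p1} together with the ascendance of all subgroups of $R$ from Lemma~\ref{za}. The only (harmless) difference is that you apply the ``moreover'' clause of Lemma~\ref{p1} directly to $\langle a\rangle\times D^{\ast}$, whereas the paper splits the complement as $A_{1}\times A_{2}$ and recovers $X$ as the intersection $A_{1}X\cap A_{2}X$ of two normal subgroups.
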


\begin{proof}
It follows from Lemma \ref{za} that 
$R$ is a Dedekind group; in particular, all subgroups of $R$ are subnormal
in $G$. Let $X$ be a cyclic subgroup of $R$; the hypotesis imply that $R$
contains a subgroup $A$ which is the direct product of infinitely many
cyclic non-trivial subgroups such that $X\cap A=\{1\}$. Write $A=A_{1}\times
A_{2}$ where both $A_{1}$ and $A_{2}$ are the direct product of an infinite
collection of non-trivial cyclic subgroups. Application of Lemma \ref{p1}
yields that both $A_{1}X$ and $A_{2}X$ are normal in $G$. Therefore $%
X=A_{1}X\cap A_{2}X$ is likewise a normal subgroup of $G$. Thus all
subgroups of $R$ are normal in $G$.
\end{proof}

\medskip
Recall that the {\it total rank} of an abelian group is the sum of all $p$-ranks for $p=0$ or $p$ prime; in particular, an abelian group has finite total rank if and only if it is the direct sum of finitely many cyclic and quasicyclic groups and a torsion-free group of finite rank.

\medskip
\begin{lemma}
\label{asc}Let $G$ be a group with RCC on non-pronormal subgroups. If $A$ is an ascendant abelian subgroup of $G$ which is not a minimax group, then all
subgroups of $A$ are normal in $G$ and all cyclic subgroups of $G/A$ are
pronormal.
\end{lemma}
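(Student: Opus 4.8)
The plan is to prove the two assertions in turn, after a common set of reductions carried out inside the Hirsch--Plotkin radical $R$ of $G$. Since $A$ is abelian, hence locally nilpotent, and ascendant, it is contained in $R$; as $A$ is not minimax, neither is $R$, so Lemma~\ref{za} shows that $R$ is a Dedekind group, that $R$ is hypercentral (and, being the Hirsch--Plotkin radical, normal in $G$), and that every subgroup of $R$ is ascendant in $G$. Because $A$ is non-minimax abelian it has a quotient $A/B_{0}$ which is the direct product of infinitely many non-trivial subgroups (\cite{GKR}, Lemma~3.2), so Lemma~\ref{p1} gives that $A$ is pronormal and, being ascendant, normal in $G$ by Lemma~\ref{f1}(ii); I record $A\lhd G$ for the second part. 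For the first assertion I would note that a subgroup is normal in $G$ as soon as all of its cyclic subgroups are, so it suffices to show that every cyclic subgroup $X$ of $A$ is normal in $G$; and since $X$ is normal in the abelian group $A$ and $A$ is ascendant in $G$, transitivity of ascendance makes $X$ ascendant in $G$, so by Lemma~\ref{f1}(ii) it is enough to exhibit $X$ as an intersection of pronormal subgroups of $G$.

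The engine is the following intersection argument. If $A/X$ is itself the direct product of infinitely many non-trivial subgroups, I would split it as $(H_{1}/X)\times(H_{2}/X)$ with each factor again an infinite direct product and $H_{1}\cap H_{2}=X$; then $H_{1}$ and $H_{2}$ are ascendant with infinite-direct-product quotients, so Lemma~\ref{p1} together with Lemma~\ref{f1}(ii) gives $H_{1},H_{2}\lhd G$, whence $X=H_{1}\cap H_{2}\lhd G$. This settles at once the case in which $A$ has infinite total rank: then $R$ contains a direct product of infinitely many non-trivial cyclic subgroups and Lemma~\ref{p2} applies directly, forcing all subgroups of $R$, a fortiori all subgroups of $A$, to be normal in $G$.

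The delicate case, which I expect to be the main obstacle, is when $A$ has finite total rank but is not minimax, so that its torsion-free part has finite rank and is not minimax. Here, for some cyclic $X$ the quotient $A/X$ need not split into infinitely many factors (it may be $\mathbb{Q}$-like), and the intersection trick does not apply verbatim. I would handle this by choosing, via \cite{GKR} Lemma~3.2, a subgroup $B$ with $A/B$ an infinite direct product and passing to $G/B$: this quotient again satisfies RCC on non-pronormal subgroups (pronormality is inherited by images, so an $\mathbb{R}$-chain of non-pronormal subgroups of $G/B$ would lift to one in $G$), the image $A/B$ lies in the Hirsch--Plotkin radical of $G/B$ and contains a direct product of infinitely many cyclic subgroups, so Lemma~\ref{p2} forces every subgroup of $A/B$ to be normal in $G/B$, while the splitting above gives $B\lhd G$. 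Thus every subgroup of $A$ containing such a $B$ is normal in $G$, and the cyclic subgroups that avoid every admissible $B$ are reached by an induction on the torsion-free rank, descending into $B$. This finite-rank, torsion-free descent is precisely the point where neither Lemma~\ref{p2} nor the plain intersection argument applies, and where the structure theory of non-minimax abelian groups of finite rank must be exploited.

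For the second assertion I would use $A\lhd G$ and the fact, just proved, that every subgroup of $A$ is normal in $G$. A cyclic subgroup of $G/A$ has the form $A\langle g\rangle/A$, and since $A\leq A\langle g\rangle$ with $A\lhd G$ it is enough to show that $A\langle g\rangle$ is pronormal in $G$. Fixing an infinite-direct-product section $A/B=\mathrm{Dr}_{\lambda}(H_{\lambda}/B)$ with $B\geq\langle g\rangle\cap A$ (possible after discarding finitely many factors), each $H_{\lambda}$ is normal in $G$ by the first part, so $\langle H_{\lambda},g\rangle=H_{\lambda}\langle g\rangle$; applying Lemma~\ref{1} with $L=\langle g\rangle$ yields a subgroup $H^{\ast}$ with $B\leq H^{\ast}\leq A$ such that $\langle g\rangle H^{\ast}$ is pronormal in $G$. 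Finally, since $A$ is normal it is normalised by $\langle g\rangle H^{\ast}$, so Lemma~\ref{f1}(i) gives that $(\langle g\rangle H^{\ast})A=A\langle g\rangle$ is pronormal in $G$, and passing to $G/A$ shows that the cyclic subgroup $A\langle g\rangle/A$ is pronormal, as required.
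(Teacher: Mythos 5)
Your reductions (containment in the Hirsch--Plotkin radical, the intersection trick $X=H_{1}\cap H_{2}$ via Lemma \ref{p1} and Lemma \ref{f1}(ii), the use of Lemma \ref{p2} when $A$ has infinite total rank) and your proof of the second assertion (choose a section $A/B$ that is an infinite direct product avoiding $\langle g\rangle$, apply Lemma \ref{1} with $L=\langle g\rangle$, then absorb $A$ by Lemma \ref{f1}(i)) all match the paper's argument. But the first assertion has a genuine gap exactly where you flag it: when $A$ has finite total rank and is not minimax, you do not prove that cyclic subgroups of $A$ are normal, you only announce that ``structure theory must be exploited'' and propose an ``induction on the torsion-free rank, descending into $B$.'' That descent cannot close as stated: the admissible $B$'s are (finite extensions of) free subgroups $E$ of finite rank, hence finitely generated and minimax, so the lemma's hypothesis fails inside $B$; and no cyclic subgroup of $A$ contains such a $B$, so ``every subgroup containing $B$ is normal'' never reaches the cyclic subgroups you need.

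The paper's resolution of this case is concrete and worth recording. For $x\in A$ of infinite order, choose a free subgroup $E$ of $A$ containing $x$ with $A/E$ periodic; since $A$ has finite total rank but is not minimax, every finite-index subgroup $L$ of $E$ containing $x$ is finitely generated and $A/L$ has infinitely many nontrivial primary components $A_{p}/L$. Lemma \ref{p1} (applied to the ascendant subgroup $A$ and the section $A/L$) makes each $A_{p}$ normal in $G$, hence $L=\bigcap_{p}A_{p}$ is normal in $G$; Mal'cev's theorem that $\langle x\rangle$ is closed in the finitely generated group $E$ then gives $\langle x\rangle$ normal in $G$ as an intersection of such $L$'s. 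Finite-order elements $y$ are handled by writing $\langle x,y\rangle=\langle x,xy\rangle$ as a product of two normal infinite cyclic subgroups and extracting $\langle y\rangle$ as its torsion subgroup. Without this (or an equivalent) argument, the key case of the first assertion -- and hence everything downstream that uses it, including your own second part -- remains unproved.
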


\begin{proof}
Clearly $A$ is contained in the Hirsch-Plotkin radical of $G$. In
order to prove that all subgroups of $A$ are normal in $G$, Lemma \ref{p2}
allows us to suppose that $A$ has finite total rank so that, in particular, since $A$ is not minimax, we have that $A$ is not periodic.  

Let $x$ be any element of infinite order of $A$.  Consider any free subgroup $E$ 
of $A$ such that $x\in A$ and $A/E$ is periodic; then $E$ is finitely
generated and $A/E$ has infinitely many primary components. Let $L$ be any
subgroup of finite index on $E$ containg $x$, then also $L$ is finitely
generated and $A/L$ has infinitely many primary components; let us denote by 
$A_p/L$ the $p$-component of $A/L$. By Lemma \ref{p1}, $A_p$ is $G$%
-invariant for each $p\in\pi(A/L)$. Thus 
\begin{equation*}
L=\underset{p\in\pi (A/L)}{\bigcap}A_{p}
\end{equation*}
is likewise normal in $G$. Since a well-know result by Mal'cev (see \cite{R96}, 5.4.16) yields that $%
\left\langle x\right\rangle $ is a closed subgroup of $E$, it follows that $%
\left\langle x\right\rangle $ is a normal subgroup of $G$. Hence all infinite
cyclic subgroups of $A$ are $G$-invariant, so that if $y\in A$ has finite
order also $\left\langle x,y\right\rangle =\left\langle x,xy\right\rangle $
is normal in $G$ and hence $\left\langle y\right\rangle $ is likewise normal
in $G$ since $\left\langle y\right\rangle $ is the torsion subgroup of $%
\left\langle x,y\right\rangle $. Therefore all subgroups of $A$ are normal
in $G$.

Consider now any cyclic subgroup $X$ of $G$ which is not contained in $A$, and let again $E$ be a free subgroup of $A$ such that $A/E$ is periodic; then $E$ is normal in $G$. 
Replacing 
$A/E$ with a suitable direct product of (infinitely many) its primary
components, we may clearly suppose that $(A/E)\cap (XE/E)$ is trivial and it is
possible apply Lemma \ref{1} to obtain that there exists a normal subgroup $%
A^{\ast}$ of $A$ containing $E$ such that $(XE)A^{\ast}=A^{\ast}(XE)$ is
pronormal in $G$. Thus $XA=((XE)A^{\ast})A$ is pronormal in $G$ by Lemma \ref%
{f2} and so $XA/A$ is likewise pronormal in $G/A$. Therefore all cyclic
subgroups of $G/A$ are pronormal.
\end{proof}

\medskip Recall that a group in which all subnormal subgroups are normal is
called a \textit{$T$-group}. The structure of soluble $T$-groups was
described by Gash\"utz and Robinson (see \cite{R64}). It turns out that
soluble $T$-groups are metabelian and that finitely generated soluble $T$%
-groups are either finite or abelian; moreover, every subgroup of a finite
soluble $T$-group is itself a $T$-group but this is no longer true for
infinite soluble $T$-groups. Groups in which all subgroups are $T$-groups
are called \textit{$\bar T$-groups}. It is easy to see that finite $\bar T$%
-groups are soluble so that, in particular, \textit{any locally
(radical-by-finite) $\bar T$-group is always metabelian and even abelian if
it is not periodic}.

The concept of pronormal subgroups arise naturally in the study of $\bar T$%
-groups. In fact, \textit{if all cyclic subgroups of a group $G$ are
pronormal then $G$ is a $\bar T$-group} (see \cite{dGV}, Lemma 3.2) and Peng 
\cite{P} proved that the converse also holds for finite soluble groups.
Infinite (gerenalised) soluble infinite groups in which all subgroups are
pronormal have been descrived in \cite{KS}, where an example of infinite
soluble periodic $\bar T$-group containing non-pronormal subgroups is also
given (thus, in particular, Peng's theorem does not hold for infinite
groups).

\medskip

\begin{lemma}
\label{lemma4} Let $G$ be a group with RCC on non-pronormal subgroups, and let $A$ be any
subgroup of $G$ which is the direct product of infinitely many cyclic
non-trivial subgroups. If $A$ is an ascendant subgroup of $G$, then $G$ is a 
$\bar{T}$-group.
\end{lemma}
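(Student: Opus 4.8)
The plan is first to put $G/A$ into a manageable form and then to lift the $T$-property across $A$. Since $A$ is a direct product of infinitely many non-trivial cyclic groups it is an abelian non-minimax group, and by hypothesis it is ascendant; hence Lemma~\ref{asc} applies and gives that \emph{all subgroups of $A$ are normal in $G$} (so that $G$ induces power automorphisms on $A$) and that \emph{all cyclic subgroups of $G/A$ are pronormal}. By the result of \cite{dGV} recalled above, the latter means that $G/A$ is itself a $\bar{T}$-group. Thus I would be left to promote ``$G/A$ is $\bar{T}$'' to ``$G$ is $\bar{T}$'' using only the extra information that every subgroup of $A$ is $G$-invariant.

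To verify that $G$ is a $\bar{T}$-group I would take an arbitrary subgroup $H\le G$ together with a subnormal subgroup $S$ of $H$, and aim to show $S\trianglelefteq H$. A subnormal subgroup is ascendant, so by Lemma~\ref{f2}(ii) it is enough to prove that $S$ is \emph{pronormal} in $H$; and since pronormality is inherited by intermediate subgroups, it would in fact suffice to show that $S$ is pronormal in $G$. As a first step, working modulo $A$, the subgroup $SA/A$ is subnormal in the $\bar{T}$-group $HA/A$, hence normal there; intersecting with $H$ and using Dedekind's law gives $S(A\cap H)\trianglelefteq H$. This reduces the whole problem to controlling the position of $S$ inside the normal subgroup $S(A\cap H)$, that is, to showing that $S$ is normal (equivalently pronormal) there.

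The decisive step is to feed $S$ into Lemma~\ref{1}. Exactly as in the proofs of Lemma~\ref{p2} and Lemma~\ref{asc}, I would extract from $A$ an infinite sub-product $D=\mathrm{Dr}_{\lambda}A_{\lambda}$ of non-trivial cyclic factors with $D\cap S=\{1\}$; because every subgroup of $A$ is normal in $G$ we have $\langle A_{\lambda},S\rangle=A_{\lambda}S$ for each $\lambda$, so all the hypotheses of Lemma~\ref{1} are met with $L=S$. Since $G$ has RCC there is no $\mathbb{R}$-chain of non-pronormal subgroups, and Lemma~\ref{1} then produces a normal subgroup $D^{\ast}$ of $D$ such that $SD^{\ast}$ is pronormal in $G$. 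Because $D^{\ast}\le A$ has all of its subgroups normal in $G$ and $S$ is ascendant, I would combine this pronormal overgroup with Lemma~\ref{f2}(i)--(ii) to conclude that $S$ itself is pronormal, and hence normal, in $H$. As $H$ and $S$ were arbitrary, every subgroup of $G$ is a $T$-group, i.e. $G$ is a $\bar{T}$-group.

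The main obstacle is precisely this last conversion, from ``$SD^{\ast}$ is pronormal'' to ``$S$ is normal in $H$''. The naive extension statement---$N$ abelian with all its subgroups normal in $G$ together with $G/N$ a $\bar{T}$-group---is \emph{false} (the dihedral group $D_{8}$ with $N$ its cyclic subgroup of order $4$ has all subgroups of $N$ normal and $D_{8}/N\cong C_{2}$ a $T$-group, yet $D_{8}$ is not a $T$-group), so the argument genuinely cannot rely on the quotient and the power-automorphism action alone: it must use RCC through the infinite direct product $A$ to rule out such finite non-Dedekind obstructions. Making the disjointness $D\cap S=\{1\}$ and the permutability $\langle A_{\lambda},S\rangle=A_{\lambda}S$ hold simultaneously, and then extracting the normality of $S$ from the single pronormal subgroup $SD^{\ast}$, is where the real work lies; everything else is bookkeeping with Dedekind's law and the structural facts recorded in Lemmas~\ref{f2}--\ref{asc}.
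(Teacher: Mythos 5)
Your setup is on the right track --- Lemma~\ref{asc} applied to $A$, then Lemma~\ref{1} with $L$ the subnormal subgroup in question --- but the proposal has two genuine gaps, and you have correctly identified the second one yourself without closing it. First, the extraction of an infinite sub-product $D$ of the given cyclic factors with $D\cap S=\{1\}$ is only possible when $S\cap A$ is contained in finitely many of those factors; if $S\cap A$ is not finitely generated (e.g.\ $S\supseteq A$) no such $D$ exists. The paper splits into cases on exactly this point: when $Y_0=Y\cap A$ is not finitely generated, $Y_0$ is itself a direct product of infinitely many non-trivial cyclic subgroups, so Lemma~\ref{asc} applies to $Y_0$ rather than to $A$, giving that all cyclic subgroups of $G/Y_0$ are pronormal; hence $G/Y_0$ is a $\bar{T}$-group and $Y\trianglelefteq X$ immediately. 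Your reduction ``modulo $A$'' does not substitute for this, since (as your own $D_8$ example shows) knowing $G/A$ is a $\bar T$-group and that $G$ acts by power automorphisms on $A$ is not enough.

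Second, and more importantly, the step from ``$SD^{\ast}$ is pronormal in $G$'' to ``$S$ is normal'' cannot be made with a \emph{single} product, and the paper's device for this is precisely what is missing from your sketch: in the finitely generated case one chooses \emph{two} disjoint infinite sub-products $A_1$, $A_2$ with $A_1\cap A_2=\langle A_1,A_2\rangle\cap Y=\{1\}$, applies Lemma~\ref{1} twice to get $A_1^{\ast}Y$ and $A_2^{\ast}Y$ both pronormal in $G$, observes that each is also subnormal in $AX$ and hence normal there by Lemma~\ref{f1}(ii), and then recovers $Y=A_1^{\ast}Y\cap A_2^{\ast}Y$ as an intersection of two normal subgroups. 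The disjointness conditions are exactly what make that intersection collapse back to $Y$. Since your proposal stops at one pronormal overgroup $SD^{\ast}$ and explicitly concedes that the conversion to normality of $S$ is unresolved, the argument as written is incomplete at its decisive step.
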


\begin{proof}
Consider a subgroup $X$ of $G$ and a subnormal subgroup $Y$ of $X$. Suppose
first that $Y_0=Y\cap A$ is not finitely generated; hence also $Y_0$ is a
direct product of infinitely many non-trivial cyclic subgroups (see \cite{R96}, 4.3.16). 
Application of Lemma \ref{asc} yields that $Y_0$ is normal in $G$ and all
cyclic subgroups of $G/Y_0$ are pronormal. Therefore, as just quoted above, $%
G/Y_0$ is a $\bar{T}$-group and hence $Y$ is normal in $X$.

Assume now that $Y_0$ is finitely generated, thus $Y_0$ is contained in a
direct product of finitely many direct factors of $A$ and hence certainly
there exist two subgroups $A_1$ and $A_2$ which are both the direct product
of infinitely many direct factors of $A$ such that $A_1\cap A_2=\langle A_1,
A_2\rangle \cap Y=\{1\}$. Since all subgroups of $A$ are normal in $G$ by
Lemma\;\ref{asc}, application of Lemma \ref{1} gives that there exist two
subgroups $A_1^{\ast}\leq A_1$ and $A_2^{\ast}\leq A_2$ such that $%
A_1^{\ast}Y$ and $A_2^{\ast}Y$ are both pronormal in $G$. In particular, $%
A_1^{\ast}Y$ and $A_2^{\ast}Y$ are pronormal and subnormal in $AX$, so that
they are both normal in $AX$ by Lemma \ref{f1}. Thus $Y=A_1^{\ast}Y\cap
A_2^{\ast}Y$ is likewise normal in $AX$, and so also in $X$. Therefore $X$
is a $T$-group and so $G$ is a $\bar{T}$-group.
\end{proof}

\medskip

Now we deal with locally (radical-by-finite) groups.

\begin{lemma}
\label{asc'}Let $G$ be a locally (radical-by-finite) group with RCC on non-pronormal subgroups, and let $A$ be an ascendant abelian subgroup of $G$ which is
not a minimax group. If $G/A$ is not periodic, then $G$ is abelian.
\end{lemma}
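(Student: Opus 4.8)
The plan is to extract structural information from Lemma \ref{asc} and then to prove that $G$ is abelian by showing $A\le Z(G)$. Applying Lemma \ref{asc} to $A$, every subgroup of $A$ is normal in $G$ (so $A\trianglelefteq G$ and $G$ induces power automorphisms on $A$) and every cyclic subgroup of $G/A$ is pronormal; by the result quoted just before Lemma \ref{lemma4}, this makes $G/A$ a $\bar T$-group, and since $G/A$ is locally (radical-by-finite) and not periodic it must be abelian. In particular $G'\le A$, so $G$ is metabelian.

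Next I would split according to whether $A$ is periodic. If $A$ is periodic, then since a periodic abelian group of finite total rank would be Chernikov and hence minimax, $A$ must have infinite total rank; consequently $A$ contains a subgroup $A_0$ which is the direct product of infinitely many non-trivial cyclic subgroups, and $A_0$ is ascendant in $G$ (being normal in the ascendant abelian subgroup $A$). Lemma \ref{lemma4} then yields that $G$ is a $\bar T$-group, and a non-periodic locally (radical-by-finite) $\bar T$-group is abelian, so this case is settled.

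Suppose then that $A$ is not periodic, and write $T$ for its torsion subgroup. Since $G/A$ is abelian and not periodic, it is generated by its elements of infinite order, so it suffices to prove that every $x\in G$ with $xA$ of infinite order centralizes $A$; this gives $A\le Z(G)$. Such an $x$ acts on the torsion-free group $A/T$ as a power automorphism, hence as $+1$ or as $-1$. If it acts as $+1$, then $x$ fixes every element of infinite order and a short computation with the normal subgroups $\langle at\rangle$ ($a$ of infinite order, $t\in T$) shows that $x$ fixes $T$ as well, so $x$ centralizes $A$. The heart of the argument is to rule out the remaining possibility that $x$ inverts $A/T$. In that case $x^2$ centralizes $A/T$ and, by the same torsion computation, the whole of $A$; hence $B=A\langle x^2\rangle$ is abelian, it is ascendant in $G$ (as $B/A\trianglelefteq G/A$), and it is not minimax. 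Applying Lemma \ref{asc} to $B$ would then force every subgroup of $B$ to be normal in $G$; but for any $a\in A$ of infinite order the subgroup $\langle ax^2\rangle\le B$ satisfies $(ax^2)^x=a^{-1}x^2\notin\langle ax^2\rangle$, so it is not $x$-invariant, a contradiction. Therefore inversion cannot occur and $x$ centralizes $A$.

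Once $A\le Z(G)$ is established, $G/Z(G)$ is a quotient of the abelian group $G/A$, so $G$ is nilpotent of class at most $2$; being locally nilpotent and not minimax, $G$ is a Dedekind group by Lemma \ref{za}, and since non-abelian Dedekind groups are periodic while $G$ is not, $G$ is abelian. The main obstacle is precisely the exclusion of the inverting case in the non-periodic situation: the key device is to pass to the auxiliary ascendant abelian subgroup $B=A\langle x^2\rangle$ and apply Lemma \ref{asc} to it, thereby turning a hypothetical inversion into a concrete non-normal cyclic subgroup $\langle ax^2\rangle$ and a contradiction, rather than having to construct an $\mathbb{R}$-chain of non-pronormal subgroups by hand.
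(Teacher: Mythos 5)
Your proof is correct, but it takes a genuinely different route from the paper's. The paper never isolates the statement $A\le Z(G)$: it first reduces to the case where $A$ has finite total rank (observing that whenever $A$ has non-minimax subgroups $A_1,A_2$ with $A_1\cap A_2=B$, Lemma \ref{asc} makes each $G/A_i$ a non-periodic locally (radical-by-finite) $\bar T$-group, hence abelian, so $G'\le B$), then picks a free subgroup $E$ with $A/E$ periodic and uses the residual finiteness of $E$ together with the infinitely many primary components of $A/L$ to force $G'\le L$ for every finite-index subgroup $L$ of $E$, whence $G'=\{1\}$. You instead establish the stronger intermediate fact $A\le Z(G)$ by an element-wise power-automorphism analysis: the periodic case is disposed of via Lemma \ref{lemma4}, and inversion on $A/T$ is ruled out by applying Lemma \ref{asc} to the auxiliary ascendant abelian subgroup $A\langle x^2\rangle$ and exhibiting the non-$x$-invariant cyclic subgroup $\langle ax^2\rangle$. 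That trick is legitimate precisely because $G/A$ is non-periodic, so $x^2$ has infinite order modulo $A$ and $A\langle x^2\rangle=A\times\langle x^2\rangle$; note that the same device is unavailable in the harder Lemma \ref{asc1}, where $G/A$ is periodic and the paper must resort to quotients of type $2^{\infty}$. Your endgame (class $\le 2$, hence Dedekind by Lemma \ref{za}, hence abelian since non-abelian Dedekind groups are periodic) is also sound. What the paper's route buys is uniformity --- no case split on the periodicity of $A$ and no computation with individual elements; what yours buys is the explicit centrality of $A$ and a concrete contradiction in place of the residual-finiteness intersection argument.
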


\begin{proof}
By Lemma \ref{asc} all subgroups of $A$ are normal in $G$. If $B$ is a
subgroup of $A$ such that $A/B$ contains non-minimax subgroups $A_1/B$ and $%
A_2/B$ such that $A_1\cap A_2=B$, Lemma \ref{asc} gives that $G/A_1$ and $%
G/A_2$ are groups whose cyclic subgroups are pronormal, thus they are $\bar{T%
}$-groups 
and hence they are abelian being locally (radical-by-finite) and
non-periodic; thus $G^{\prime}\leq A_1\cap A_2=B$. In particular, in order
to prove that $G$ is abelian it can be assumed that $A$ cannot be decomposed
as the direct product of two non-minimax subgroups; hence $A$ does not
contains subgroups which are the direct product of infinitely many
non-trivial cyclic subgroups and so $A$ has finite total rank. Let $E$ be a
free subgroup of $A$ such that $A/E$ is periodic. Since $E$ is residually
finite, it is enought to show that $G/L$ is abelian for each subgroup $L$ of
finite index of $E$. Since $A$ is not minimax but it has finite total rank, such a subgroup $L$ is finitely generated and the periodic abelian group $A/L$ has infinitely
many primary components. Hence, $A/L$ can be decomposed as the direct
product of two non-minimax subgroups and so $G^{\prime }\leq L$, as whished.
\end{proof}

\medskip For the sake of completeness let us prove the following elementary
property of abelian groups.

\begin{lemma}
\label{ab}Let $A$ be any torsion-free abelian group and assume that $%
A^{p^{n}}=A^{p^{n+1}} $ for some prime $p$ and $n\in\mathbb{N}$. Then there
exists a subgroup $B$ of $A$ such that $A/B$ is a group of type $p^{\infty}$.
\end{lemma}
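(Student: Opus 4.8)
The plan is to translate the hypothesis into a divisibility statement and then obtain the desired quotient by extending a homomorphism into a Pr\"ufer group, using the fact that divisible abelian groups are injective. Writing everything multiplicatively as in the statement, I would first set $C=A^{p^{n}}$. Since $A$ is abelian one has $A^{p^{n+1}}=(A^{p^{n}})^{p}=C^{p}$, so the hypothesis $A^{p^{n}}=A^{p^{n+1}}$ says precisely that $C=C^{p}$, i.e.\ that $C$ is $p$-divisible. Being a subgroup of the torsion-free group $A$, the group $C$ is itself torsion-free; moreover $C\neq 1$ provided $A\neq 1$ (if $A=1$ the statement is vacuous, and this case may be discarded), because in a torsion-free group $a^{p^{n}}=1$ forces $a=1$.

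The second step is to produce a section of type $p^{\infty}$ inside $C$. Choosing $1\neq c_{0}\in C$ and applying $p$-divisibility repeatedly, I would select elements $c_{i}\in C$ with $c_{i+1}^{p}=c_{i}$ for all $i\geq 0$, and put $D=\langle c_{i}:i\geq 0\rangle$. Passing to $D/\langle c_{0}\rangle$, the images $\bar c_{i}$ satisfy $\bar c_{i+1}^{\,p}=\bar c_{i}$ and $\bar c_{i}^{\,p^{i}}=\bar c_{0}=1$; a short check using torsion-freeness of $C$ shows that each $\bar c_{i}$ has order exactly $p^{i}$, so that $D/\langle c_{0}\rangle=\langle \bar c_{i}:i\geq 0\rangle$ is a group of type $p^{\infty}$. (Equivalently, the assignment $c_{i}\mapsto 1/p^{i}$ identifies $D$ with $\mathbb{Z}[1/p]$ and $\langle c_{0}\rangle$ with $\mathbb{Z}$.)

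Finally I would extend the natural surjection $\psi_{0}\colon D\to D/\langle c_{0}\rangle$ to all of $A$. Since the target $D/\langle c_{0}\rangle$ is a group of type $p^{\infty}$, it is divisible and hence an injective $\mathbb{Z}$-module; therefore the homomorphism $\psi_{0}$ of the subgroup $D\leq A$ into it extends to a homomorphism $\psi\colon A\to D/\langle c_{0}\rangle$. As $\psi$ agrees with $\psi_{0}$ on $D$ it is surjective, so setting $B=\ker\psi$ gives $A/B\cong D/\langle c_{0}\rangle$, a group of type $p^{\infty}$, as required.

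The only genuinely non-elementary ingredient, and the step I expect to be the crux, is this last extension: it rests on the injectivity (equivalently, divisibility) of the Pr\"ufer group, which guarantees that a homomorphism defined merely on the subgroup $D$ extends to the whole of $A$. Everything else -- the reduction to $p$-divisibility of $C$ and the verification that $D/\langle c_{0}\rangle$ has type $p^{\infty}$ -- is routine, the latter amounting only to order computations that torsion-freeness renders immediate.
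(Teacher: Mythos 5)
Your proof is correct and follows essentially the same route as the paper: both construct the subgroup $D=\langle c_0,c_1,c_2,\dots\rangle$ (the paper's $P$) from iterated $p$-th roots of a non-trivial element of the $p$-divisible subgroup $A^{p^n}$ and observe that $D/\langle c_0\rangle$ is a Pr\"ufer $p$-group. The only divergence is the final step, where you invoke injectivity of the divisible group $D/\langle c_0\rangle$ to extend the quotient map to all of $A$, whereas the paper applies the equivalent direct-summand formulation of Baer's theorem to split $P/\langle x_0\rangle$ off from $A/\langle x_0\rangle$ --- two versions of the same fact.
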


\begin{proof}
Let $x_{0}$ be any element of $A^{p^{n}}$. Since $%
A^{p^{n}}=A^{p^{n+1}}=(A^{p^{n}})^{p}$, there exist elements $%
x_{1},x_{2},x_{3},...$ of $A^{p^{n}}$ such that $x_{0}=x_{1}^{p}$, $%
x_{1}=x_{2}^{p}$, $x_{2}=x_{3}^{p}, ...$. Put $P=\left\langle
x_{0},x_{1},x_{2},x_{3},...\right\rangle $, so that $P/\left\langle
x_{0}\right\rangle $ is a Pr\"{u}fer $p$-group. Then a well-know result of
Baer (see \cite{R96}, 4.1.3) yields that there exists a subgroup $%
B/\left\langle x_{0}\right\rangle $ such that $A/\left\langle
x_{0}\right\rangle =P/\left\langle x_{0}\right\rangle \times B/\left\langle
x_{0}\right\rangle $. Hence $A/B\simeq P/\left\langle x_{0}\right\rangle $
is a group of type $p^{\infty}$.
\end{proof}

\medskip

\begin{lemma}
\label{asc1}Let $G$ be a locally (radical-by-finite) group with RCC on non-pronormal subgroups, and let $A$ be an ascendant abelian subgroup of $G$ which is not a minimax group. If $A$ is torsion-free, then $A\leq Z(G)$.
\end{lemma}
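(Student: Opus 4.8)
The plan is to first pin down the action of $G$ on $A$ and then, assuming that action is nontrivial, to manufacture an $\mathbb{R}$-chain of non-pronormal subgroups and so contradict the hypothesis. First I would record that, by Lemma~\ref{asc}, every subgroup of $A$ is normal in $G$; in particular each infinite cyclic $\langle a\rangle\le A$ is normal, so $a^g=a^{\pm1}$ for every $g\in G$ (as $\mathrm{Aut}\langle a\rangle=\{\pm1\}$). Since $A$ is torsion-free abelian, the elements fixed, resp.\ inverted, by a fixed $g$ form two subgroups whose union is $A$, so each $g$ either centralises or inverts $A$; the resulting map $g\mapsto\pm1$ is a homomorphism $G\to\{\pm1\}$ with kernel $C_G(A)$, whence $|G:C_G(A)|\le2$. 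Thus it suffices to exclude an element $g$ inverting $A$, for then $C_G(A)=G$, i.e.\ $A\le Z(G)$.

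Before attacking inversion I would dispose of two cases with earlier results. If $G/A$ is not periodic, Lemma~\ref{asc'} already gives that $G$ is abelian. If $A$ has infinite torsion-free rank, then $A$ contains a direct product of infinitely many infinite cyclic subgroups, which is ascendant in $G$, so Lemma~\ref{lemma4} makes $G$ a $\bar T$-group; being a non-periodic locally (radical-by-finite) $\bar T$-group it is abelian. In both cases $A\le Z(G)$, so I may assume $G/A$ periodic and $A$ of finite (but non-minimax) rank; in particular any inverting $g$ has finite order and $\langle g\rangle\cap A=1$.

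Now suppose for contradiction that $g$ inverts $A$. The engine is a non-pronormality criterion. For $B\le A$ (automatically normal in $G$ and $g$-invariant) set $H_B=B\langle g\rangle$ and conjugate by $a\in A$: since $g$ inverts $A$ one computes $g^a=a^{-2}g$, so $\langle H_B,H_B^a\rangle=\langle B,a^2\rangle\rtimes\langle g\rangle$ is a generalised dihedral group whose conjugates of $H_B$ are exactly the reflection subgroups $B\langle s^2g\rangle$, $s\in\langle B,a^2\rangle$. Writing cosets in $A/B$, one checks that $H_B$ and $H_B^a$ are conjugate in $\langle H_B,H_B^a\rangle$ if and only if $2a\in B+4\langle a\rangle$; consequently $H_B$ is non-pronormal in $G$ as soon as $A/B$ possesses an element of infinite order or of order divisible by $4$ (this is where torsion-freeness is used, via the infinite-dihedral sections that separate reflections into distinct conjugacy classes). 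The remaining task is to realise an $\mathbb{R}$-chain $(B_r)_{r\in\mathbb R}$ of subgroups with every $A/B_r$ carrying such an element: then $H_r=B_r\langle g\rangle$ are all non-pronormal, and since $\langle g\rangle\cap A=1$ gives $H_r\cap A=B_r$ they form a strictly increasing chain of order type $\mathbb{R}$, contradicting RCC. Because $A$ is non-minimax of finite rank, a finitely generated free $E\le A$ with $A/E$ periodic has infinitely many nonzero primary components (otherwise $A$ would be minimax), hence infinitely many for odd primes $p_q$ indexed by $q\in\mathbb{Q}$; I would anchor an infinite cyclic direction $\langle x\rangle$ and set $B_r=\langle 4x\rangle+\sum_{q<r}(\text{the }p_q\text{-primary part})$, so that the image of $x$ retains order divisible by $4$ in each $A/B_r$, with Lemma~\ref{ab} organising the divisible primary sections.

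The main obstacle is exactly this last step: keeping a single order-$4$ (or infinite-order) element alive in every quotient $A/B_r$ while the chain runs through order type $\mathbb{R}$. The infinitude of primes coming from non-minimaxity supplies the length of the chain, but ensuring $2x\notin B_r$ when $A$ is $2$-divisible, and making the primary sections along the chain behave uniformly, is the delicate bookkeeping where torsion-freeness (for the order-$4$ anchor and the dihedral phenomenon) and Lemma~\ref{ab} (for the quasicyclic sections) are indispensable.
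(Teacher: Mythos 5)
Your reductions are sound and match the paper's: Lemma~\ref{asc} gives that $g$ acts by power automorphisms, hence by inversion with $|G:C_G(A)|\le 2$; Lemma~\ref{asc'} disposes of $G/A$ non-periodic; and the infinite-rank case is handled (the paper uses two disjoint free subgroups of infinite rank and Lemma~\ref{asc'} rather than Lemma~\ref{lemma4}, but both work). Your conjugacy computation in the generalised dihedral group is also correct: $B\langle g\rangle$ fails to be pronormal whenever $A/B$ has an element of infinite order or of order divisible by $4$. The genuine gap is the step you yourself flag as ``the main obstacle'': you never actually produce the $\mathbb{R}$-chain $(B_r)$ with every $A/B_r$ carrying such an element, and the formula you propose does not work as written. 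If ``the $p_q$-primary part'' means the preimage $A_{p_q}$ of the $p_q$-component of $A/E$, then each $A_{p_q}$ contains $E\ni x$, so $B_r\supseteq E$ kills the image of $x$; and if $A$ is $2$-divisible the anchor $\langle 4x\rangle$ is useless since $A=4A$, forcing you into quasicyclic $2$-sections, i.e.\ into a case distinction you have not made. Making the quotients $A/B_r$ uniformly retain $2$-torsion of order $\ge 4$ along a continuum-length chain is exactly the hard part, and it is not ``bookkeeping'' that can be waved at.

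It is worth noting that the paper avoids this construction entirely. Instead of building a chain of non-pronormal subgroups, it exploits consequences of RCC already harvested in Lemma~\ref{asc} (for $B\le A$ non-minimax, every cyclic subgroup of $G/B$ is pronormal) together with Lemma~\ref{f1}(ii) (pronormal $+$ ascendant $=$ normal). Concretely: if $A$ had a quotient $A/B$ of type $2^\infty$, then $\langle g,A\rangle/\langle g^2,B\rangle$ is a locally dihedral $2$-group, hence hypercentral, so $\langle g,B\rangle$ is ascendant and pronormal, hence normal in $\langle g,A\rangle$ --- impossible since $g$ inverts the divisible quotient. So $A$ has no $2^\infty$ quotient, Lemma~\ref{ab} gives $A^2\neq A^4$, $A/A^4$ is finite with an element of order $4$, and then $\langle g,A^4\rangle$ is pronormal and subnormal (nilpotence of a bounded abelian-by-finite $2$-group), hence normal, yielding a direct decomposition incompatible with inversion on a section of exponent exactly $4$. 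This dichotomy ($2^\infty$ quotient versus $A^2\neq A^4$) is precisely the case analysis your chain construction would need, so to complete your route you would have to rediscover it; as it stands the proposal is incomplete.
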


\begin{proof}
By Lemma \ref{asc}, all subgroups of $A$ are normal in $G$ and Lemma \ref%
{asc'} allows us to suppose that $G/A$ is periodic. Assume, by
contradiction, that there exists $g\in G$ such that $[A,g]\neq\{1\}$. Since $%
g$ acts as a power automorphism on $A$, $g$ induces the inversion on $A$ (see \cite{R64}, Lemma 4.1.1) and 
$g^{2}\in C_{G}(A)$. Then, as $G/A$ is periodic, if $g^{n}\in A$, we have
that $g^{n}=g^{-n}$ and so $g$ is periodic. In particular, $%
A\cap\left\langle g\right\rangle =\{1\}$.

Note that we may assume that $A$ has finite rank. In fact, if not, $A$ would
contains two free subgroups of infinite rank $X$ and $Y$ such that $X\cap
Y=\{1\}$ and Lemma \ref{asc'} would imply that both factors $G/X$ and $G/Y$
are abelian, so that $G^{\prime}=\{1\}$ and the statement would be true.

Let $B$ any subgroup of $A$ such that $A/B$ is a group of type $2^{\infty}$.
Then $\left\langle g,A\right\rangle /\left\langle g^{2},B\right\rangle $ is
a locally dihedral $2$-group, hence it is hypercentral; in particular, it
follows that $\left\langle g,B\right\rangle $ is an ascendant subgroup of $%
\left\langle g,A\right\rangle $. On the other hand, $B$ is not minimax and
hence $\left\langle g,B\right\rangle /B$ is a pronormal subgroup of $G/B$ by
Lemma \ref{asc}. Therefore Lemma \ref{f1} yields that $\left\langle
g,B\right\rangle $ is even normal in $\left\langle g,A\right\rangle $, a
contradiction. It follows that $A$ has no quotients of type $2^{\infty}$,
hence application of Lemma \ref{ab} gives that $A^{2}\neq A^{4}$. Therefore $%
A/A^{4}$ contains some non-trivial element of order $4$.

As $A$ has finite rank, also $A/A^{4}$ has finite rank; on the other hand,
any primary abelian group of finite rank satisfy the minimal condition, and
hence we have that $A/A^{4}$ si finite. Thus $A^{4}$ is not minimax. Then
follows from Lemma \ref{asc} that $\left\langle g,A^{4}\right\rangle $ is
pronormal il $G$. Since $\left\langle g,A\right\rangle /\left\langle
g^{2},A^{4}\right\rangle $ is an abelian-by-finite $2$-group of finite
exponent, it is nilpotent (see \cite{R72} Part 2, Lemma 6.34) and so $%
\left\langle g,A^{4}\right\rangle $ is pronormal and subnormal in $%
\left\langle g,A\right\rangle $. Thus $\left\langle g,A^{4}\right\rangle $
is normal in $\left\langle g,A\right\rangle $ by Lemma \ref{f1}, and 
\begin{equation*}
\left\langle g,A\right\rangle /\left\langle g^{2},A^{4}\right\rangle
=\left\langle g,A^{4}\right\rangle /\left\langle g^{2},A^{4}\right\rangle
\times\left\langle g^{2},A\right\rangle /\left\langle
g^{2},A^{4}\right\rangle
\end{equation*}
but this is not possible as $g$ induces the inversion on $\left\langle
g^{2},A\right\rangle /\left\langle g^{2},A^{4}\right\rangle $ and $%
\left\langle g^{2},A\right\rangle /\left\langle g^{2},A^{4}\right\rangle $
has exponent (exactly) $4$. This contradiction proves that $A\leq Z(G)$.
\end{proof}

\medskip

\begin{lemma}
\label{hm} Let $G$ be a locally (radical-by-finite) group with RCC on non-pronormal subgroups, and assume that the Hirsch-Plotkin radical $R$ of $G$ is not
minimax. Then $G$ is a $\bar{T}$-group.
\end{lemma}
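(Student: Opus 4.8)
The plan is to reduce everything to the two engines already available: Lemma~\ref{lemma4}, which converts an ascendant direct product of infinitely many non-trivial cyclic subgroups into the conclusion that $G$ is a $\bar T$-group, and Lemma~\ref{asc}, which for an ascendant abelian non-minimax subgroup forces all of its subgroups to be normal in $G$. First I would record, via Lemma~\ref{za}, that the non-minimax locally nilpotent subgroup $R$ is a Dedekind group and that all of its subgroups are ascendant in $G$. I would then split the argument according to whether $R$ contains a subgroup $A$ which is the direct product of infinitely many non-trivial cyclic subgroups. If such an $A$ exists it is ascendant in $G$ by Lemma~\ref{za}, and Lemma~\ref{lemma4} finishes the proof immediately.

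The substantial case is when $R$ contains no such $A$. Here I would first pin down the structure of $R$: a non-abelian Dedekind (Hamiltonian) group is periodic, and a periodic non-minimax Dedekind group has infinite total rank, hence contains a direct product of infinitely many cyclic subgroups; similarly an abelian group of infinite total rank contains such a direct product. Thus $R$ must be abelian of finite total rank, and since a periodic abelian group of finite total rank is minimax, $R$ is non-periodic. Now Lemma~\ref{asc}, applied to $A=R$, yields that every subgroup of $R$ is normal in $G$; in particular every power subgroup of $R$ is normal in $G$.

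The step I expect to be the main obstacle is the final descent, because knowing that suitable quotients of $G$ are $\bar T$-groups does not by itself make $G$ one: a central quotient of a $\bar T$-group need not be a $\bar T$-group, as the dihedral group of order $8$ shows. To get past this I would exploit that $R$ is non-periodic of \emph{finite} torsion-free rank and choose a free abelian subgroup $F\leq R$ of full torsion-free rank. Then, for every $n$, the group $R/F^{n}$ is periodic, abelian and non-minimax, hence contains a direct product of infinitely many non-trivial cyclic subgroups; this subgroup is ascendant in $G/F^{n}$ since $F^{n}$ is normal in $G$ and all subgroups of $R/F^{n}$ are normal in $G/F^{n}$. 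As pronormality is inherited by quotient images, an $\mathbb{R}$-chain of non-pronormal subgroups of $G/F^{n}$ would lift to one in $G$; so $G/F^{n}$ again has the real chain condition on non-pronormal subgroups, and Lemma~\ref{lemma4} shows that each $G/F^{n}$ is a $\bar T$-group.

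It then remains to assemble these quotients, and here the key point is that no centrality of $F$ is needed. Given a subnormal subgroup $Y$ of an arbitrary $X\leq G$, the fact that $G/F^{n}$ is a $\bar T$-group gives $YF^{n}\trianglelefteq XF^{n}$, whence by Dedekind's law $YF^{n}\cap X=Y(X\cap F^{n})$ is normal in $X$ for every $n$. Since $F$ is finitely generated with $\bigcap_{n}F^{n}=1$, the finitely generated abelian image $FY/Y$ has trivial maximal divisible subgroup, so $\bigcap_{n}YF^{n}=Y$ and therefore $Y=\bigcap_{n}Y(X\cap F^{n})$ is normal in $X$. Hence $X$ is a $T$-group for every $X\leq G$, that is, $G$ is a $\bar T$-group, as required.
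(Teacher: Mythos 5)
Your proof is correct, but its endgame is genuinely different from the paper's. Both arguments begin the same way: Lemma \ref{za} makes $R$ a Dedekind group, and the case where $R$ contains an ascendant direct product of infinitely many non-trivial cyclic subgroups is disposed of by Lemma \ref{lemma4} (the paper does this implicitly, through the socle of the torsion subgroup and the rank of the free subgroup $E$), leaving $R$ abelian, non-periodic, of finite total rank, with all subgroups normal in $G$ by Lemma \ref{asc}. From there the paper argues by contradiction: it invokes Lemma \ref{asc1} to place the torsion-free part in $Z(G)$, hence $R\leq Z(G)$; uses Lemma \ref{asc'} to make $G/A$ periodic; shows $G''\leq L$ for every finite-index subgroup $L$ of a free subgroup $E$, so that $G$ is soluble; and then obtains $G=C_G(R)\leq R$ from the self-centralizing property of the Hirsch--Plotkin radical of a soluble group. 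You instead prove directly that each $G/F^{n}$ is a $\bar T$-group and recover the $\bar T$ property of $G$ itself from the identity $\bigcap_{n}YF^{n}=Y$; this bypasses Lemmas \ref{asc'} and \ref{asc1} and the solubility argument altogether, and you correctly identified and handled the one real danger, namely that being a $\bar T$-group does not pass from quotients back to the group. The only blemish is the phrase ``the finitely generated abelian image $FY/Y$'': $Y$ need not be normal in $FY$, so this is not a group. The intended fact $\bigcap_{n}YF^{n}=Y$ is nevertheless true: if $g=y_{1}f_{1}=y_{n}f_{n}$ with $y_{i}\in Y$ and $f_{n}\in F^{n}$, then $f_{n}f_{1}^{-1}=y_{n}^{-1}y_{1}\in Y\cap F$, so $f_{1}\in(Y\cap F)F^{n}$ for every $n$, and $\bigcap_{n}(Y\cap F)F^{n}=Y\cap F$ because $F/(Y\cap F)$ is finitely generated abelian (here one uses that $F$, being a subgroup of $R$, is normal in $G$, so that $YF^{n}$ is indeed a subgroup). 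With that repair your argument is complete.
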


\begin{proof}
Assume, by contradiction, that $G$ is not a $\bar{T}$-group. Lemma \ref{za}
yields that $R$ is a Dedekind group, and so the torsion subgroup $T$ of $R$
is a Chernikov group as a consequence of Lemma \ref{lemma4} appied to the socle of $R$. Hence $%
T\neq R$, so that $R$ is non periodic. Thus $R$ is abelian and so $R=T\times
A$ by a suitable torsion-free subgroup $A$ (see for instance \cite{R96},
4.3.9); in particular, $A$ is not minimax. By Lemma \ref{asc}, all subgroups
of $R$ are normal in $G$ so that $G$ acts on $R$ either trivially or as the
inversion map (see \cite{R64}, Lemma 4.1.1); on the other hand, $A$ is a
torsion-free (non-trivial) subgroup of $R$ which is contained in $Z(G)$ by
Lemma \ref{asc1} and hence $G$ must acts trivially on $R$. Thus $R\leq Z(G)$.

Clearly $G$ is not abelian, hence $G/A$ is periodic by Lemma \ref{asc'}. Let 
$E$ be a free subgroup of $A$ such that $A/E$ is periodic. Then Lemma %
\ref{lemma4} yields that $E$ is finitely generated, so that $A/E$ does not
satisfy the minimal condition. Since $E$ is residually finite, in order to
prove that $G$ is soluble it is enought to prove that $G^{\prime\prime}$ is
contained in each subgroup of finite index of $E$. Let $L$ be any subgroup
of finite index of $E$. Then $L$ is a central subgroup of $G$ and $A/L$ does
not satisfy the minimal condition, hence the socle of $A/L$ is the direct
product of infinitely many non-trivial cyclic subgroups and so $G/L$ is a $%
\bar{T}$-group by Lemma \ref{lemma4}; in particular, since $G/L$ is
locally (radical-by-finite), it follows that $G/L$ is metabelian. Thus $%
G^{\prime\prime}\leq L$ and so, as noted above, this is enought to guarantee
that $G$ soluble. Therefore $C_{G}(R)\leq R$ and so, as $R\leq Z(G)$, we
have that $G=R$ is abelian. This contradiction proves the lemma.
\end{proof}

\medskip As already noted, a group whose (cyclic) subgroups are pronormal is
a $\bar T$-group, moreover, any locally (radical-by-finite) $\bar T$-group
is soluble (actually metabelian). Notice that also locally
(radical-by-finite) minimax groups are soluble-by-finite. In fact, it is
known that any radical minimax group is soluble (see \cite{R72} Part 2,
Theorem 10.35), while locally (soluble-by-finite) minimax groups are
soluble-by-finite (see \cite{DES}, Lemma 8).

\medskip

\begin{lemma}
\label{LRF} Let $G$ be a locally (radical-by-finite) group with RCC on non-pronormal subgroups. Then either $G$ is minimax or all subgroups of $G$ are
pronormal. In particular, $G$ is soluble-by-finite.
\end{lemma}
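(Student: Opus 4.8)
The plan is to dichotomise on the Hirsch--Plotkin radical $R$ of $G$: I would show that if $R$ is not minimax then every subgroup of $G$ is pronormal, whereas if $R$ is minimax then $G$ itself is minimax. In both branches the concluding clause is immediate, since a locally (radical-by-finite) group whose (cyclic) subgroups are all pronormal is a $\bar{T}$-group and hence metabelian, while a locally (radical-by-finite) minimax group is soluble-by-finite, as recalled just before the statement.

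Suppose first that $R$ is not minimax. Then Lemma \ref{hm} at once gives that $G$ is a $\bar{T}$-group, so in particular $G$ is metabelian. By Lemma \ref{za} the radical $R$ is a non-minimax Dedekind group all of whose subgroups are ascendant in $G$; taking inside $R$ an ascendant abelian non-minimax subgroup $A$ (furnished by the abelian part of the Dedekind group $R$), Lemma \ref{asc} shows that every subgroup of $A$ is normal in $G$ and that every cyclic subgroup of $G/A$ is pronormal. The remaining task is to promote this to the full statement that every subgroup of $G$ is pronormal. For an arbitrary $X\le G$ one first observes that $XA$ is pronormal in $G$, its image in $G/A$ being a product of pronormal cyclic subgroups and pronormality passing up from quotients, and then one has to prove that $X$ is pronormal inside $XA$, using Lemma \ref{f1}(i) and the fact that $G$ induces only power automorphisms on $A$. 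I expect this promotion to be the main obstacle: I would settle it by appealing to the classification in \cite{KS} of the locally (radical-by-finite) groups whose subgroups are all pronormal and checking that $G$ falls under it, the only case to be excluded being the exceptional periodic $\bar{T}$-group of \cite{KS}, which is ruled out here because it would produce an $\mathbb{R}$-chain of non-pronormal subgroups, contrary to the real chain condition.

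Suppose now that $R$ is minimax; I claim $G$ is minimax. The first remark is that $G/R$ is again locally (radical-by-finite) and inherits the real chain condition on non-pronormal subgroups, since an $\mathbb{R}$-chain of non-pronormal subgroups of $G/R$ lifts, through the preimages containing $R$, to such a chain in $G$. I would then try to show that the Hirsch--Plotkin radical $\bar{R}$ of $G/R$ is itself minimax. In the torsion-free situation this is clean: a non-minimax torsion-free part of $\bar{R}$ would, by Lemmas \ref{za} and \ref{asc1}, yield an ascendant abelian non-minimax central subgroup $A/R$ of $G/R$, and since $R$ is hypercentral its preimage $A$ would be an ascendant locally nilpotent subgroup of $G$ properly containing $R$, contradicting the maximality of the Hirsch--Plotkin radical. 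The hard part will be the periodic case and the termination of the ensuing reduction, where centrality is no longer available (a power automorphism may invert) and one must instead control the rank and the length of the tower of successive Hirsch--Plotkin radicals, using Lemmas \ref{p2} and \ref{lemma4} to handle infinite direct products of cyclic subgroups and the minimaxness of each factor to keep the process under control.

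In summary, the two delicate points are the promotion to ``all subgroups pronormal'' in the non-minimax branch, where the real chain condition is exactly what discards the exceptional $\bar{T}$-groups of \cite{KS}, and the rank/termination analysis in the minimax branch, where one must confirm that a locally (radical-by-finite) group with minimax Hirsch--Plotkin radical and the real chain condition is minimax. Once both branches are established, the dichotomy follows, and the final assertion that $G$ is soluble-by-finite is then automatic: in the first case $G$ is metabelian and in the second it is a soluble-by-finite minimax group.
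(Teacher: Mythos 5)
Your dichotomy (on whether the Hirsch--Plotkin radical $R$ is minimax) is essentially the paper's (which splits on whether $G$ is a $\bar{T}$-group, the two being linked by Lemma \ref{hm}), and you cite the right preparatory lemmas; but both branches stop exactly where the real work begins, so there are two genuine gaps. In the non-minimax branch you reduce to a periodic metabelian $\bar{T}$-group and then assert that the exceptional groups of \cite{KS} are ``ruled out because they would produce an $\mathbb{R}$-chain of non-pronormal subgroups'' --- but that is precisely the claim requiring proof, and no such chain is exhibited. The paper instead verifies the Kuzenny\u{\i}--Subbotin criterion directly: setting $L=[G',G]$ and $\pi=\pi(G/L)$, it assumes $PL<G$ for some Sylow $\pi$-subgroup $P$ and derives a contradiction in two steps: if $P$ is not Chernikov then $P$ is periodic, locally nilpotent and non-minimax, hence pronormal in $G$ by Lemma \ref{za}, and the factorisation $G=P^GN_G(P)=PLN_G(P)$ forces $G/PL$ to be a $\pi'$-group; if $P$ is Chernikov, one takes a finite non-trivial $F/PL$, splits $F$ over $L$ via the finiteness of $F/C_F(L)$, and contradicts Sylow conjugacy in the abelian-by-finite group $F$. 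Nothing in your sketch replaces this. (A smaller flaw: your claim that $XA/A$ is pronormal ``being a product of pronormal cyclic subgroups'' is unsound as stated, since products of pronormal subgroups need not be pronormal without the invariance hypothesis of Lemma \ref{f1}(i).)

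In the minimax branch, your plan of climbing the tower of successive Hirsch--Plotkin radicals of $G/R$, $G/\bar{R}$, \dots\ and ``controlling the rank and the length of the tower'' is exactly the step you concede you cannot complete, and there is no evident reason such a tower terminates. The paper avoids the tower entirely: assuming $G$ is neither a $\bar{T}$-group nor minimax, Lemma \ref{hm} applied to an arbitrary quotient $G/N$ shows that $G/N$ is either metabelian or has a minimax (hence soluble) Hirsch--Plotkin radical, so every non-trivial quotient of $G$ has a non-trivial abelian normal subgroup; thus $G$ is hyperabelian, and since all its ascendant abelian subgroups are minimax, $G$ is minimax by the result quoted from \cite{R72} (Part 2, p.~175). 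This hyperabelian-plus-minimax-ascendant-abelian-subgroups step is the missing idea; without it, or a termination argument for your tower, the second branch is not established.
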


\begin{proof}
Assume first that $G$ is a $\bar{T}$-group which is not minimax; in
particular, $G$ is metabelian. Since locally (radical-by-finite)
non-periodic $\bar T$-groups are abelian, in order to prove that all subgroups of 
$G$ are pronormal, we may further suppose that $G$ is periodic. Thus, if $%
L=[G^{\prime},G]$, we have $\pi(L)\cap\pi(G/L)=\emptyset$ and $%
2\not\in\pi(L) $ (see \cite{R64}, Theorem 6.1.1). Let $\pi=\pi(G/L)$ and
assume, by contradiction, that there exists a Sylow $\pi$-subgroup $P$ of $G$
such that $PL$ is a proper normal subgroup of $G$. Suppose that $P$ is not a
Chernikov group. Since $P\simeq PL/L$ is nilpotent, Lemma \ref{za} yields
that $P$ is pronormal in $G$ and hence $G=P^GN_G(P)=PLN_G(P)$. Since $N_G(P)/P$ is
clearly a $\pi^{\prime}$-group also 
\begin{equation*}
G/PL\simeq N_G(P)/(PL\cap N_G(P))
\end{equation*}
is a $\pi^{\prime}$-group, a contradiction which proves that $P$ is a
Chernikov group. Let $F/PL$ be a finite non-trivial subgroup of $G/PL$.
Since $G$ acts as a group of power automorphisms of $L$, the factor $%
G/C_G(L) $ is residually finite and so the largest divisible subgroup $D$ of 
$P$ is contained in $C_G(L)$. Therefore $F/C_F(L)$ is finite and so $F=XL$
for a suitable subgroup $X$ such that $X\cap L=\{1\}$ (see \cite{D}, Theorem
2.4.5). But $F/DL$ is finite, hence $F$ is abelian-by-finite and so its
Sylow $\pi$-subgroups $P$ and $X$ are conjugate, which is not possible by
the choice of $X$. Therefore $G=PL$ for each Sylow $\pi$-subgroup $P$ of $G$%
. It follows that all subgroups of $G$ are pronormal (see \cite{KS}) and
hence the lemma holds when $G$ is a $\bar T$-group.

Assume now that $G$ is not a $\bar T$-group and, by contradiction, suppose
that $G$ is not minimax. Hence Lemma \ref{hm} yields the Hirsch-Plotkin
radical of $G$ is minimax, so that in particular all ascendant abelian
subroups of $G$ are minimax. Let $N$ be any normal subgroup of $G$, and let $%
H/N$ be the Hirsch-Plotkin radical of $G/N$. Then Lemma \ref{hm} yields that
either $G/N$ is a $\bar{T}$-group (and hence metabelian) or $H/N$ is minimax
(and hence soluble), thus the last non-trivial term of the derived series of 
$H/N$ is a non-trivial abelian normal subgroup of $G/N$. It follows that $G$
is hyperabelian and hence $G$ is minimax (see \cite{R72} Part 2, p.175), a
contradiction which concludes the proof.
\end{proof}

\medskip
%
%
%
%
%
%

\medskip
Now we are in a position to prove a Theorem that contains the one stated in the introduction. In the next statement, {\it pronormal deviation} means that the set of non-pronormal subgroups has deviation (see \cite{GKR}) and, as quoted in the introduction, this is equivalent to require that there are no subsets of non-pronormal subgroups order isomorphic to the set $\mathbb{Q}$ of rational numbers with their usual ordering. 
%
%

\medskip
\begin{theorem} Let $G$ be a gerenalised radical group. Then the following are
equivalent:
\begin{itemize}
\item[(i)] $G$ satisfies the real chain condition on non-pronormal subgroups;

\item[(ii)] $G$ has pronormal deviation;

\item[(iii)] $G$ satisfies the weak minimal condition on non-pronormal
subgroups;

\item[(iv)] $G$ satisfies the weak maximal condition on non-pronormal
subgroups;

\item[(v)] $G$ satisfies the weak double condition on non-pronormal
subgroups.

\item[(vi)] either $G$ is a soluble-by-finite minimax group or all subgroups of $G$ are pronormal.
\end{itemize}
\end{theorem}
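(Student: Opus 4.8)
The plan is to reduce everything to the single new implication $(i)\Rightarrow(vi)$ and to treat the comparison of the five chain conditions as formal. First I would dispose of $(vi)\Rightarrow(i)$--$(v)$: if all subgroups of $G$ are pronormal the poset of non-pronormal subgroups is empty and there is nothing to prove, while if $G$ is a soluble-by-finite minimax group then the lattice of \emph{all} subgroups of $G$ has deviation by \cite{T03} and satisfies the weak minimal, weak maximal and weak double chain conditions by Zaicev \cite{Z}; since each of these conditions passes to the subposet of non-pronormal subgroups, $(vi)$ implies each of $(i)$--$(v)$. For the reverse directions into $(i)$, note that $(ii)\Rightarrow(i)$ because a copy of $\mathbb{R}$ contains a copy of $\mathbb{Q}$, so forbidding the latter forbids the former; and each of $(iii),(iv),(v)\Rightarrow(i)$ because an $\mathbb{R}$-chain of non-pronormal subgroups yields strictly descending, strictly ascending and doubly infinite chains all of whose successive indices are infinite (between any two comparable members of the chain lie uncountably many further members), contradicting the respective weak condition. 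Hence the five conditions are sandwiched between $(i)$ and $(vi)$, and the whole theorem follows once $(i)\Rightarrow(vi)$ is proved.

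For $(i)\Rightarrow(vi)$ I would argue by contradiction, assuming that $G$ has RCC on non-pronormal subgroups and is not a soluble-by-finite minimax group, and aiming to show that all subgroups of $G$ are pronormal. The earlier lemmas already pin down the bottom of $G$: by Lemma~\ref{za} the Hirsch--Plotkin radical $R$ is hypercentral when minimax and a Dedekind group otherwise, and in the non-minimax case Lemmas~\ref{p2} and \ref{asc} force every subgroup of $R$ to be normal in $G$, with the torsion part of $R$ Chernikov exactly as in the proof of Lemma~\ref{hm}. The goal is then to verify that $G$ is locally (radical-by-finite), for once this is known Lemma~\ref{LRF} applies directly and gives that $G$ is minimax or all subgroups of $G$ are pronormal; as the minimax alternative is excluded, $(vi)$ follows.

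The main obstacle is precisely this reduction to the locally (radical-by-finite) case, which is the one place where the passage from ``radical'' (as in \cite{GKR}) to ``generalised radical'' really bites: a generalised radical group need not a priori be locally (radical-by-finite), since the locally finite factors of its defining ascending series may carry non-soluble composition factors, so the chain condition must be used to exclude this. I would exploit Lemma~\ref{p1}, by which every section of $G$ that is the direct product of infinitely many non-trivial subgroups is forced to be pronormal (with normal factors in the ascendant case), together with the Chernikov conclusion that Lemma~\ref{lemma4} extracts from socles, to rule out arbitrarily large non-soluble locally finite sections: each such section would manufacture an $\mathbb{R}$-chain of non-pronormal subgroups. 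Running this along an ascending normal series of $G$ with locally nilpotent or locally finite factors should show that each factor is soluble-by-finite of controlled type, whence every finitely generated subgroup of $G$ is soluble-by-finite, in particular radical-by-finite, and $G$ is locally (radical-by-finite) as required.

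Finally I would assemble the argument: the reduction together with Lemma~\ref{LRF} proves $(i)\Rightarrow(vi)$ and records that $G$ is soluble-by-finite, and combined with the sandwiching of the first paragraph this yields the equivalence of $(i)$--$(vi)$. I expect the genuinely delicate point to be the treatment of the locally finite sections, where one must upgrade ``not soluble-by-finite'' to an explicit $\mathbb{R}$-chain of non-pronormal subgroups, rather than to the mere $\mathbb{Q}$-subposet that the deviation hypothesis of \cite{GKR} would tolerate.
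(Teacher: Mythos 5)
Your overall architecture coincides with the paper's: close the cycle of implications among (i)--(v) and (vi) by elementary order-theoretic remarks (your direct arguments for $(ii),(iii),(iv),(v)\Rightarrow(i)$ and $(vi)\Rightarrow$ everything are fine and essentially equivalent to the paper's chain $(i)\Rightarrow(vi)\Rightarrow(iii),(iv)\Rightarrow(v)\Rightarrow(ii)\Rightarrow(i)$), and concentrate all the content in $(i)\Rightarrow(vi)$, which you correctly reduce to Lemma~\ref{LRF} once $G$ is known to be soluble-by-finite, or at least locally (radical-by-finite). You also correctly identify where the generalised radical hypothesis bites.

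The gap is in how you carry out that reduction. Your plan is to ``rule out arbitrarily large non-soluble locally finite sections'' by showing directly, via Lemmas~\ref{p1} and \ref{lemma4}, that each such section ``would manufacture an $\mathbb{R}$-chain of non-pronormal subgroups.'' No such construction is given, and none is needed: producing an explicit $\mathbb{R}$-chain inside an arbitrary non-(soluble-by-finite) locally finite group is essentially as hard as the whole of Lemma~\ref{LRF}. The paper's (much simpler) observation is that each factor $G_{\alpha+1}/G_\alpha$ of the defining ascending series is locally nilpotent or locally finite, hence is \emph{already} locally (radical-by-finite) -- finitely generated subgroups are nilpotent or finite -- and, being a section of $G$, inherits RCC on non-pronormal subgroups; so Lemma~\ref{LRF} applies to each factor at once and makes it soluble-by-finite. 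One then climbs the series by transfinite induction: at successor steps an extension of soluble-by-finite by soluble-by-finite is soluble-by-finite, and at limit steps the union of soluble-by-finite terms is locally (soluble-by-finite), so Lemma~\ref{LRF} applies again. This induction is also what repairs your unjustified inference ``each factor is soluble-by-finite, whence every finitely generated subgroup of $G$ is soluble-by-finite'': that implication is false for a general ascending series (a finitely generated subgroup meets the lower terms in subgroups that need not be finitely generated), and only becomes available once each \emph{term} $G_\alpha$, not merely each factor, has been shown soluble-by-finite. With these two repairs your argument becomes the paper's proof; as written, the crucial step is asserted rather than proved.
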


\begin{proof}
Let $G$ be a gerenalised radical group with the real chain condition on non-pronormal subgroups. Clearly, any section of $G$ has likewise the real chain condition and so Lemma \ref{LRF} yields that each locally (radical-by-finite) section of $G$ is soluble-by-finite; therefore, since $G$ is gerenalised radical, by transfinite induction can be obtained that $G$ itself is soluble-by-finite and thus again Lemma \ref{LRF} gives that either $G$ is minimax or all subgroups of $G$ are pronormal. Hence $G$ satisfies both the weak minimal and weak maximal condition on non-pronormal subgroups. 

On the other hand, the weak minimal condition on non-pronomal subgroups, as well as the weak maximal condition on non-pronormal subgroups, imply the weak double chain condition on non-pronormal subgroups and so also the pronormal deviation which in turn implies the real chain condition on non-pronormal  subgroups (see \cite{DDM}).\end{proof}



\bigskip

\end{document}